\theoremstyle{definition}
\newtheorem{theorem}{\indent Theorem}[section]
\newtheorem{lemma}[theorem]{\indent Lemma}
\theoremstyle{definition}
\newtheorem{definition}[theorem]{\indent Definition}
\newtheorem{example}[theorem]{\indent Example}
\theoremstyle{definition}
\newtheorem{remark}[theorem]{\indent Remark}
\begin{document}
\null
\vskip 1.8truecm

\title[Some remarks on anti-topological spaces]
{Some remarks \\ on anti-topological spaces} 
\author{Tomasz Witczak}    
\address{Institute of Mathematics\\ Faculty of Science and Technology
\\ University of Silesia\\ Bankowa~14\\ 40-007 Katowice\\ Poland}
\email{tm.witczak@gmail.com} 


\begin{abstract}This paper is devoted to a general presentation of anti-topological spaces. These structures have been initially proposed by \c{S}ahin, Karg{\i}n and M. Y\"{u}cel in 2021. We analyse their basic definition, showing some of its subtleties and implications. The framework thus obtained is used to investigate anti-topological interpretation of some basic topological notions. For example, we discuss the idea of interior and closure and we show some results on door spaces. Moreover, we introduce two non-equivalent types of continuity. We investigate the idea of density and nowhere density. Finally, we give some preliminary suggestions concerning the modal logic of anti-topological spaces. 

It is noteworthy that the paper contains some additional remarks on infra-topological and weak spaces. They may be considered as a clarification or correction of some earlier results present in literature. 

\end{abstract}

\subjclass{Primary: 54A05 ; Secondary: 03B45}
\keywords{Anti-topological spaces, infra-topological spaces, weak structures }


\maketitle


\section{Introduction}

Anti-topological spaces have been defined by \c{S}ahin, Karg{\i}n and M. Y\"{u}cel in \cite{SAHIN}. These structures have been introduced together with neutro-topological spaces. The authors studied some basic properties of these two classes and the most striking relationships between them. In this paper we shall concentrate only on anti-topologies (without any special references to the matter of fuzziness and similar concepts like intuitionistic fuzziness, softness or neutrosophy).  

Undoubtedly, the past three decades were rich in works addressing the idea of generalization of the initial notion of topological space. All these studies can be easily justified. First, they show us which conditions are really important for the preservation of some basic topological properties (and which are superflous). 

Second, they provoke some kind of discussion on the mathematical, logical and philosophical meaning of the terms used in topology (like openness, closeness, interior, closure, density, nowhere density, open cover etc.). This is because some of these objects, operations and properties behave in an untypical way when they are applied to various generalized structures. For example, the interior of a set need not to be open. The closure of empty set need not to be empty. The interior of the whole universe may not be identical with this universe etc.

Third, this line of research is helpful when it comes to classification of numerous types of sets and their families. This becomes especially evident when one considers that various weak forms of open sets (that is, $\alpha$-, pre-, $\beta$-, semi- or $b$- open sets) have been adapted in many generalized settings. 

Generalization of the notion of topology relies on the assumption that we can \emph{remove} some of the conditions which constitute the family of open sets. For example, we can give up the assumption of closure under arbitrary unions (to obtain \emph{infra}-topologies\footnote{Cs\'asz\'ar named them \emph{quasi-topologies}.}, see \cite{ODH}, \cite{CONT} and \cite{DHANA}) or finite intersections (to get \emph{supra}-topologies, see \cite{CSASZAR} and \cite{MASS}). We may drop both these restrictions and this step gives us \emph{minimal} structures (see \cite{POPA}). If the only requirement is openness of empty set, then we have \emph{weak structures} (see \cite{WEAK}). Finally, we may turn our attention to \emph{generalized weak structures} which are arbitary families of subsets. These spaces have been introduced ten years ago by Avila and Molina in \cite{AVILA}. Later, they have been studied in \cite{JAMUNA}. However, it seems that already in 1966 Kim-Leong Lim took on the task of reconstructing topology on the basis of the most general assumptions (see \cite{KIM}). He assumed that \emph{generalized topology} is just any subset of $P(X)$. This is identical with the approach of Avila and Molina. Hence, his contribution should be appreciated by modern authors.

However, generalization is not the only possible modification. In this paper we want to think over the idea of anti-topological space. All the spaces mentioned above are connected with the concept of \emph{closure} under certain operations or with the assumption that some distinguished sets (like empty set or the whole universe) necessarily belong to our family. Anti-topological strategy reverses this approach (at least in some sense). What is constitutive for anti-topology, is the fact that intersections and unions of elements of the family in question, are \emph{beyond} (anti-)topology. Again, we can ask: what does it mean for the basic notions mentioned earlier (like openness or density)? In this paper we give several answers and additional suggestions. Our aim is to present some kind of general framework which could be later used both by us and other authors. In fact, we have already extended this results in \cite{WITCZAK} (where we analysed the notion of rarity in anti-topological spaces).

\section{Basic notions}

In general, the very basic definition of anti-topological space is taken from \cite{SAHIN}. However, we would like to discuss some issues which were not covered by the authors. Moreover, we would like to expand their initial research in a significant way. 

\begin{definition}
\label{antidef}
Let $X$ be a non-empty universe and $\mathcal{T}$ be a collection of subsets of $X$. We say that $(X, \mathcal{T})$ is an anti-topological space if the following conditions are satisfied:

\begin{enumerate}
\item $\emptyset, X \notin \mathcal{T}$.

\item For any $n \in \mathbb{N}$, if $A_1, A_2, ..., A_n \in \mathcal{T}$, then $\bigcap_{i=1}^{n} A_i \notin \mathcal{T}$ (with the assumption that the sets in question are not all identical, i.e. the intersection is non-trivial)\footnote{This assumption was not mentioned by the authors in their original paper. However, its necessity is clear.}.

\item For any collection $\{A_i\}_{i \in J \neq \emptyset}$ such that $A_i \in \mathcal{T}$ for each $i \in J$, $\bigcup_{i \in J} A_i \notin \mathcal{T}$ (with the assumption that the sets in question are not all identical, i.e. the union is non-trivial). 

\end{enumerate}
\end{definition}

We call the elements of $\mathcal{T}$ \emph{anti-open} sets, while their complements are \emph{anti-closed} sets. The set of all anti-closed sets (with respect to a given anti-topology) will be denoted by $\mathcal{T}_{Cl}$. We say that every anti-topology is \emph{anti-closed} under finite intersections and arbitrary unions (this refers respectively to Cond. (2) and Cond. (3) from the definition above). Attention: we assume that the property of being anti-closed refers only to non-trivial intersections or unions. We will use the notion of \emph{non-trivial family} to speak about those families of sets which contain at least two (different) sets.

Each anti-topology is connected with some \emph{associated space} denoted by $\tau$: the one which contains the empty set, the whole universe and all the finite intersections and arbitrary unions of the sets from $\mathcal{T}$. Clearly, this space is a topological one. Moreover, $\mathcal{T}$ is always contained in $\tau$. 

\begin{example}
\label{exlist}
Let us list down some examples of anti-topological spaces. The first one is taken from \cite{SAHIN}, the rest is our own invention. 

\begin{enumerate}
\item Let $X = \{1, 2, 3, 4\}$ and $\mathcal{T} = \{ \{1, 2\}, \{2, 3\}, \{3, 4\} \}$. Clearly, the only possible intersections are $\{1, 2\} \cap \{2, 3\} = \{2\} \notin \mathcal{T}$, $\{1, 2\} \cap \{3, 4\} = \emptyset \notin \mathcal{T}$ and $\{2, 3\} \cap \{3, 4\} = \{3\} \notin \mathcal{T}$. As for the unions, these are $\{1, 2\} \cup \{2, 3\} = \{1, 2, 3\} \notin \mathcal{T}$, $\{1, 2\} \cup \{3, 4\} = \{1, 2, 3, 4\} = X \notin \mathcal{T}$ and $\{2, 3\} \cup \{3, 4\} = \{2, 3, 4\} \notin \mathcal{T}$. 

We have $\mathcal{T}_{Cl} = \{ \{1, 2\}, \{1, 4\}, \{3, 4\} \}$. Note that $\{1, 2\}$ and $\{3, 4\}$ are both anti-open and anti-closed. 

As for the associated space, it is 

$\tau_{\mathcal{T}} = \{ \emptyset, X, \{1, 2\}, \{2, 3\}, \{3, 4\}, \{2\}, \{3\}, \{1, 2, 3\}, \{2, 3, 4\} \}$. 

\item Let $X = \{1, 2\}$ and $\mathcal{T} = \{ \{1\}, \{2\} \} = \mathcal{T}_{Cl}$. Clearly, $\{1\} \cap \{2\} = \emptyset \notin \mathcal{T}$ and $\{1\} \cup \{2\} = X \notin \mathcal{T}$. 

\item Let $X = \{a, b, c, d, e\}$ and $\mathcal{T} = \{ \{a, b\}, \{c, d\}, \{e\} \}$. Then $\mathcal{T}$ is an anti-topology on $X$. Note that each intersection of non-identical elements of this family is empty. As for the $\mathcal{T}_{Cl}$, it is $\{ \{c, d, e\}, \{a, b, e\}, \{a, b, c, d\} \}$. 

The associated space is $\tau_{\mathcal{T}} = \{ \emptyset, X, \{a, b\}, \{c, d\}, \{e\}, \{a, b, c, d\}, \\ \{a, b, e\}, \{c, d, e\} \}$. 

\item Let $X = \mathbb{N}^{+}$ and assume that $\mathcal{T}_{k}$ consists only of these finite subsets of $X$ which have cardinality $k$, where $k$ is a fixed positive natural number. Now, if $A, B \in \mathcal{T}_{k}$ and $A \neq B$, then their union has cardinality $m > k$ and their intersection has cardinality $n < k$. Clearly, $\emptyset \notin \mathcal{T}_{k}$ and $X = \mathbb{N}^{+} \notin \mathcal{T}_{k}$. Of course we may replace $\mathbb{N}^{+}$ with $\mathbb{N}$. 

\item Let $X = \mathbb{N}^{+}$ and $\mathcal{T} = \{ \{1\}, \{2\}, \{3\}, \{4\}, ... \} = \{ \{n\}, n \in \mathbb{N}^{+} \}$. This is a special case of $\mathcal{T}_{k}$ defined above (for $k = 1$). 

\item Let $X$ be arbitrary and $\mathcal{T} = \{ \{x\}, x \in X \}$. This is just a collection of all singletons of the elements of an arbitrary universe. 

\item Let $X = \mathbb{R}$ and assume that $\mathcal{T}_{y}$ consists only of these closed intervals which have length $y$, where $y$ is a fixed positive real number. Now, if $A, B \in \mathcal{T}_{y}$ and $A \neq B$, then their union has length $z > y$ (moreover, it is possible that it is not an interval at all) and their intersection has length $w < y$ (moreover, it can be empty or consist of one point). Clearly, $\emptyset$ and $X = \mathbb{R}$ do not belong to $\mathcal{T}_{y}$. 

\item Let $X = \mathbb{R}^2$ with usual Euclidean metric. Assume that $\mathcal{T}_{r}$ consists of all these closed balls which have radius $r$, where $r$ is some fixed positive real number. Any union of such balls has radius bigger than $r$ (or is not a ball at all). Any intersection has radius smaller than $r$ (or is not a ball at all). 

\item Let $X = \mathbb{N}^{+}$ and $\mathcal{T} = \{ \{1, 2\}, \{2, 3\}, \{3, 4\}, \{4, 5\}, ... \} = \{ \{n, n+1\}, n \in \mathbb{N}^{+} \}$. 

\item Let $X = \mathbb{R}$ and $\mathcal{T} = \{ \mathbb{R}^{-}, \mathbb{R}^{+} \}$. Clearly, $\mathbb{R}^{-} \cap \mathbb{R}^{+} = \emptyset \notin \mathcal{T}$ and $\mathbb{R}^{-} \cup \mathbb{R}^{+} = \mathbb{R} \setminus \{0\} \notin \mathcal{T}$. 

\item Let $X$ be arbitrary and $\emptyset \neq A \subseteq X$. Then we may define anti-topology $\mathcal{T} = \{A, X \setminus A\}$. 

\end{enumerate}
\end{example}

We would like to point out some properties which are simple but maybe not visible at first glance.

\begin{lemma}
Let $(X, \mathcal{T})$ be an anti-topological space. Then $|X| > 1$. 
\end{lemma}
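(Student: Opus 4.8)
The plan is to argue by contradiction, ruling out the only troublesome small case. Since Definition~\ref{antidef} already insists that $X$ be non-empty, we have $|X| \geq 1$, so the single case to exclude is $|X| = 1$. I would therefore assume $X = \{x\}$ for a lone point $x$ and derive an impossibility.

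First I would enumerate the subsets available. For such an $X$ the power set $P(X)$ consists of exactly two members, namely $\emptyset$ and $X$ itself. But Condition~(1) of the definition states precisely that $\emptyset \notin \mathcal{T}$ and $X \notin \mathcal{T}$. As these are the only candidates for membership, it follows at once that $\mathcal{T} = \emptyset$.

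The crux of the matter — and the point I expect to be the main, if subtle, obstacle — is that the statement holds only once we agree that an anti-topological space carries at least one anti-open set, i.e. that $\mathcal{T} \neq \emptyset$; otherwise $(\{x\}, \emptyset)$ would vacuously satisfy all three conditions and supply a counterexample. Granting this convention, the conclusion $\mathcal{T} = \emptyset$ reached above contradicts the existence of any anti-open set, and so $|X| = 1$ is impossible, leaving $|X| > 1$.

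Equivalently, and perhaps more transparently, I would phrase the argument positively: pick any $A \in \mathcal{T}$. By Condition~(1) we have $A \neq \emptyset$ and $A \neq X$, hence $\emptyset \subsetneq A \subsetneq X$. The left inclusion produces a point of $X$ lying in $A$, while the right inclusion produces a point of $X$ lying outside $A$; these two points are necessarily distinct, whence $|X| \geq 2$. This reformulation makes clear that the sole ingredient beyond Condition~(1) is the non-emptiness of $\mathcal{T}$, which is exactly the hypothesis one must make explicit.
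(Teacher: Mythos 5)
Your proof is correct and takes essentially the same route as the paper: if $|X|=1$, the only subsets of $X$ are $\emptyset$ and $X$ itself, and Condition~(1) bars both from $\mathcal{T}$. You are in fact more careful than the paper, whose own proof silently assumes $\mathcal{T} \neq \emptyset$ — as you observe, $(\{x\}, \emptyset)$ vacuously satisfies all three conditions of the definition, so the lemma genuinely requires that non-emptiness convention, a subtlety the paper's one-line argument glosses over.
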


\begin{proof}
Clearly, if $|X| = 1$, then $X$ is of the form $\{a\}$ for some $a$. The only possible subsets are $\emptyset$ and $X$ itself which are not allowed because of the very definition of anti-topology.
\end{proof}

\begin{lemma}
Assume that $(X, \mathcal{T})$ is an anti-topological space, $B \in \mathcal{T}$ and $A \subseteq B$. Then $A \notin \mathcal{T}$. 
\end{lemma}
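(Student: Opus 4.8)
The plan is to proceed by contradiction, exploiting the closure-reversal conditions (2) and (3) of Definition \ref{antidef}. First I would observe that the statement can only be intended for a \emph{proper} inclusion $A \subsetneq B$: if $A = B$ then trivially $A = B \in \mathcal{T}$, so the conclusion $A \notin \mathcal{T}$ would fail outright. Hence I read the hypothesis as $A \subsetneq B$ and take this to be the case that actually needs settling.

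Suppose, for contradiction, that $A \in \mathcal{T}$. Since also $B \in \mathcal{T}$ and $A \neq B$, the pair $\{A, B\}$ is a non-trivial family in the sense introduced after Definition \ref{antidef}. Now I would simply compute the union: because $A \subseteq B$, we have $A \cup B = B$. Condition (3), applied to the non-trivial family $\{A, B\}$, forces $A \cup B \notin \mathcal{T}$; but $A \cup B = B \in \mathcal{T}$, a contradiction. Equivalently, one may argue through the intersection, using $A \cap B = A$ together with condition (2) to obtain $A \cap B \notin \mathcal{T}$, which contradicts $A \cap B = A \in \mathcal{T}$. Either route closes the argument.

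The only delicate point — and the step I expect to require the most care — is verifying that the family $\{A, B\}$ genuinely qualifies as non-trivial, so that conditions (2) and (3) are applicable rather than vacuous. This is precisely where the footnoted proviso ``the sets in question are not all identical'' does its work: for a one-element family the closure-reversal conditions would be degenerate (indeed self-contradictory), and only the distinctness $A \neq B$ legitimises their use. Since $A \subsetneq B$ guarantees $A \neq B$, non-triviality holds and the deduction goes through. Beyond the two elementary set identities $A \cup B = B$ and $A \cap B = A$, no computation is involved.
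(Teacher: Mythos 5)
Your proposal is correct and takes essentially the same approach as the paper: the paper's one-line proof is precisely your intersection route, writing $A = A \cap B$ and invoking condition (2) on the non-trivial pair $\{A, B\}$ to conclude $A \notin \mathcal{T}$. Your explicit observation that the statement only makes sense for $A \subsetneq B$ (and that non-triviality is what licenses the use of conditions (2) and (3)) is a fair clarification of a point the paper leaves implicit, not a different argument.
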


\begin{proof}
If $A \subseteq B$ then $A = A \cap B$ and $A \cap B \notin \mathcal{T}$.
\end{proof}

One may compare this result with the idea of \emph{minimal} open sets (see \cite{CARPIN}).

\begin{lemma}
Assume that $X$ is a non-empty universe and $\mathcal{U}$ is a family of subsets of $X$ that is anti-closed under finite intersections. Then it is anti-closed under arbitrary intersections. 
\end{lemma}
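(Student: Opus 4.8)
The plan is to argue by contradiction, reducing the arbitrary intersection to a non-trivial two-set intersection to which the finite-intersection hypothesis applies directly. Let $\{A_i\}_{i \in J}$ be a non-trivial family of members of $\mathcal{U}$ (so the $A_i$ are not all identical), and set $B = \bigcap_{i \in J} A_i$. I would suppose towards a contradiction that $B \in \mathcal{U}$, and then try to exhibit $B$ itself as a non-trivial finite intersection of elements of $\mathcal{U}$, which the hypothesis forbids.

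First I would use non-triviality to fix two indices $i_0, i_1 \in J$ with $A_{i_0} \neq A_{i_1}$. Since $B$ is contained in every member of the family, $B \subseteq A_{i_0}$ and $B \subseteq A_{i_1}$; as $A_{i_0}$ and $A_{i_1}$ are distinct, $B$ cannot coincide with both of them, so (relabelling if necessary) I may assume $B \neq A_{i_0}$, and hence $B \subsetneq A_{i_0}$. The purpose of this step is to secure a member of the family that \emph{properly} contains $B$, which is exactly what is needed to produce a genuinely non-trivial intersection in the next step.

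Next I would observe that $B \subseteq A_{i_0}$ yields $B = B \cap A_{i_0}$. Under the assumption $B \in \mathcal{U}$, the sets $B$ and $A_{i_0}$ are two distinct members of $\mathcal{U}$, so $\{B, A_{i_0}\}$ is a non-trivial family and $B \cap A_{i_0}$ is a non-trivial finite intersection. By the hypothesis that $\mathcal{U}$ is anti-closed under finite intersections, $B \cap A_{i_0} \notin \mathcal{U}$; but $B \cap A_{i_0} = B \in \mathcal{U}$, a contradiction. Therefore $B \notin \mathcal{U}$, and since $\{A_i\}_{i \in J}$ was an arbitrary non-trivial family, $\mathcal{U}$ is anti-closed under arbitrary intersections.

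I expect the only delicate point to be the guarantee that $B$ is a proper subset of at least one member of the family; this is precisely where the non-triviality clause in the definition of anti-closure is essential, since without it the intersection $B \cap A_{i_0}$ could collapse to a trivial one and the hypothesis would not bite. Everything else is a single manipulation of the identity $B = B \cap A_{i_0}$, and once the proper containment is secured the finite-intersection hypothesis closes the argument at once.
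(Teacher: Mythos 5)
Your proof is correct and follows essentially the same route as the paper's: both reduce the arbitrary intersection $B = \bigcap_{i \in J} A_i$ to the non-trivial binary intersection $B \cap A_{i_0} = B$ with some member $A_{i_0} \neq B$ of the family, which the finite-intersection hypothesis excludes from $\mathcal{U}$. If anything, you are slightly more careful than the paper, which simply asserts the existence of some $A_k \neq A$ without the justification you supply via two distinct members of the family.
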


\begin{proof}
Assume that there exists certain family $\{ A_i \}_{i \in J \neq \emptyset}$ such that $|J| \geq \aleph_{0}$, for any $i \in J$, $A_i \in \mathcal{U}$ and $A = \bigcap_{i \in J} A_i \in \mathcal{U}$. Then take $A_k \neq A$ for some $k \in J$. Now we may write that $A_k \cap A = A$. This is binary intersection, hence $A \notin \mathcal{U}$. Contradiction.
\end{proof}

Clearly, the lemma above applies to anti-topologies too. Moreover, we can show that Cond. (2) and (3) from Def. \ref{antidef} are equivalent. 

\begin{lemma}
Let $X$ be a non-empty universe. Let $\mathcal{U}$ be a family of subsets of $X$ which is anti-closed under finite intersections. Then it is anti-closed under arbitrary unions. 
\end{lemma}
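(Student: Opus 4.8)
The plan is to argue by contradiction, in close analogy with the preceding lemma, exploiting the absorption identity that holds between a set and any union containing it. Suppose that $\mathcal{U}$ is anti-closed under finite intersections but \emph{fails} to be anti-closed under arbitrary unions. By the negation of Cond.~(3), this means there exists a non-trivial family $\{A_i\}_{i \in J \neq \emptyset}$ with $A_i \in \mathcal{U}$ for each $i \in J$, such that the union $A = \bigcup_{i \in J} A_i$ is again a member of $\mathcal{U}$.

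The key step is to produce a \emph{binary, non-trivial} intersection inside $\mathcal{U}$, which will violate anti-closedness under finite intersections. Since the family is non-trivial, it contains at least two distinct sets, so at most one of its members can coincide with $A$; hence I can fix some index $k \in J$ with $A_k \neq A$. Because $A_k \subseteq \bigcup_{i \in J} A_i = A$, the absorption identity gives $A_k \cap A = A_k$. Now $A_k$ and $A$ both lie in $\mathcal{U}$ and are distinct, so the assumption that $\mathcal{U}$ is anti-closed under finite intersections forces $A_k \cap A \notin \mathcal{U}$. But $A_k \cap A = A_k \in \mathcal{U}$, which is the desired contradiction. Therefore no such family exists, and $\mathcal{U}$ is anti-closed under arbitrary unions.

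I expect the only genuinely delicate point to be the bookkeeping around the non-triviality convention, rather than any real computation. One must be careful to justify the existence of an index $k$ with $A_k \neq A$ strictly from the fact that $\{A_i\}_{i \in J}$ contains two different sets, and to confirm that $A_k \cap A$ is itself a non-trivial binary intersection (i.e.\ that $A_k$ and $A$ are not identical) so that Cond.~(2) genuinely applies to it. Once this is settled, the argument is fully parallel to the previous lemma, merely replacing the role of the intersection $\bigcap_{i} A_i$ by the union $\bigcup_i A_i$ and reusing the same one-line absorption trick.
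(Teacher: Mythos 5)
Your proposal is correct and follows essentially the same argument as the paper: assume a non-trivial family $\{A_i\}_{i\in J}$ with $A = \bigcup_{i\in J} A_i \in \mathcal{U}$, pick $A_k \neq A$, and use the absorption identity $A_k \cap A = A_k$ to contradict anti-closure under finite intersections. In fact your write-up is slightly more careful than the paper's, since you explicitly justify the existence of the index $k$ and the non-triviality of the binary intersection, both of which the paper leaves implicit.
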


\begin{proof}
Assume that there exists some non-trivial family $\{ A_i \}_{i \in J \neq \emptyset}$ such that for any $i \in J$, $A_i \in \mathcal{U}$ and $A = \bigcup_{i \in J} A_i \in \mathcal{U}$. Then take $A_k \neq A$ for some $k \in J$. Then $A_k \cap A \notin \mathcal{U}$. However, $A_k \cap A = A_k$ and we assumed that $A_k \in \mathcal{U}$ (just like any other element of $\{A_i \}$). This is contradiction.
\end{proof}

\begin{lemma}
Let $X$ be a non-empty universe. Let $\mathcal{U}$ be a family of subsets of $X$ which is anti-closed under arbitrary unions. Then it is anti-closed under finite intersections. 
\end{lemma}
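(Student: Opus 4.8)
The plan is to argue by contradiction, exploiting the perfect symmetry between this statement and the previous lemma. I would suppose that $\mathcal{U}$ is anti-closed under arbitrary unions and yet fails to be anti-closed under finite intersections. The latter failure supplies finitely many sets $A_1, A_2, \ldots, A_n \in \mathcal{U}$, not all identical, whose intersection $A = \bigcap_{i=1}^{n} A_i$ nevertheless lies in $\mathcal{U}$.

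First I would locate a suitable witness among the $A_i$. Since $A \subseteq A_i$ for every $i$, if we had $A = A_i$ for all $i$, then all the $A_i$ would coincide with $A$ and hence with one another, contradicting the hypothesis that they are not all identical. Therefore there is some index $k$ with $A \neq A_k$, and because $A \subseteq A_k$ always holds, in fact $A \subsetneq A_k$.

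Next I would convert the intersection into a union and deploy the union-hypothesis. Consider the two-element family $\{A, A_k\}$. Both members belong to $\mathcal{U}$ (the former by the contradiction hypothesis, the latter by assumption), and they are distinct, so this is a genuinely non-trivial family. Its union is $A \cup A_k = A_k$, precisely because $A \subseteq A_k$, and $A_k \in \mathcal{U}$. This directly contradicts the anti-closure of $\mathcal{U}$ under arbitrary unions, which completes the argument.

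The proof is essentially routine, being the exact dual of the preceding lemma, with the roles of union and intersection interchanged. The only point requiring a moment's care is verifying that a witness $A_k \neq A$ actually exists; I expect no real obstacle here, since it follows immediately from the non-triviality clause built into the definition of anti-closure under finite intersections.
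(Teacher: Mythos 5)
Your proof is correct and follows essentially the same route as the paper's: absorb the intersection into a union with one of the original sets (here $A \cup A_k = A_k$) and contradict anti-closure under unions. If anything, you are more careful than the paper, which treats only two sets $A, B$ and forms $A \cup (A \cap B) = A$ without checking that $A \cap B \neq A$ (the union could be trivial when $A \subseteq B$), a degenerate case that your explicit choice of a witness $A_k \neq A$ rules out, and you also handle general finite families rather than just pairs.
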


\begin{proof}
Assume that there are two different subsets of $X$, namely $A$ and $B$, such that $A, B \in \mathcal{U}$ and $A \cap B \in \mathcal{U}$. Then consider $A \cup (A \cap B) = A$. By virtue of anti-closure under unions, $A \notin \mathcal{U}$. This is contradiction. Note that it was enough to assume anti-closure under finite unions. 
\end{proof}

We can check some properties of anti-closed sets. 

\begin{lemma}
Assume that $(X, \mathcal{T})$ is an anti-topological space and $A, B \in \mathcal{T}_{Cl}$. Suppose that $A \neq B$. Then $A \cap B \notin \mathcal{T}_{Cl}$. 
\end{lemma}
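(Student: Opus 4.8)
The plan is to reduce everything to the definition of anti-closed sets together with De Morgan's law, and then invoke the anti-closure under unions guaranteed by Cond.~(3) of Def.~\ref{antidef}. First I would unwind the hypothesis: since $A, B \in \mathcal{T}_{Cl}$, by definition their complements $X \setminus A$ and $X \setminus B$ both lie in $\mathcal{T}$. Likewise, the conclusion $A \cap B \notin \mathcal{T}_{Cl}$ is, after unwinding, exactly the assertion that $X \setminus (A \cap B) \notin \mathcal{T}$. So the whole problem is translated into a statement purely about the anti-open sets.

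The central move is De Morgan's law. I would write
\[
X \setminus (A \cap B) = (X \setminus A) \cup (X \setminus B),
\]
so that the complement of the intersection is displayed as a binary union of two elements of $\mathcal{T}$. At this point the result should follow directly from Cond.~(3), which forbids non-trivial unions of anti-open sets from belonging to $\mathcal{T}$.

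The only point requiring a moment of care — and the closest thing here to an obstacle — is verifying that the union above is genuinely \emph{non-trivial}, since the anti-closure property only applies to unions of sets that are not all identical. Here I would simply observe that $A \neq B$ forces $X \setminus A \neq X \setminus B$ (complementation is a bijection on $P(X)$), so the two anti-open sets being united are indeed distinct. Hence the union is non-trivial, Cond.~(3) applies, and we obtain $(X \setminus A) \cup (X \setminus B) \notin \mathcal{T}$, i.e.\ $X \setminus (A \cap B) \notin \mathcal{T}$, which is precisely $A \cap B \notin \mathcal{T}_{Cl}$. I do not expect any deeper difficulty: the lemma is essentially the dualization, under complementation, of Cond.~(3), just as the earlier equivalence lemmas exploited the duality between intersections and unions.
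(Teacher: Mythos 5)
Your proof is correct and follows essentially the same route as the paper's: complementation plus De Morgan's law reduces the claim to Cond.~(3) on unions of anti-open sets (the paper phrases it as a contradiction, you phrase it directly). Your explicit check that $A \neq B$ forces $X \setminus A \neq X \setminus B$, so the union is genuinely non-trivial, is a detail the paper's proof silently omits and is worth making explicit.
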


\begin{proof}
If $A, B \in \mathcal{T}_{Cl}$, then $-A, -B \in \mathcal{T}$. Assume that $A \cap B \in \mathcal{T}_{Cl}$. Then $-(A \cap B) \in \mathcal{T}$. But then $-A \cup -B \in \mathcal{T}$ and this is contradiction.
\end{proof}

\begin{lemma}
Assume that $(X, \mathcal{T})$ is an anti-topological space and $\{ A_i\}_{i \in J} \subseteq \mathcal{T}_{Cl}$. Then $\bigcup_{i \in J} A_i \notin \mathcal{T}_{Cl}$.
\end{lemma}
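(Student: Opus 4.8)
The plan is to dualize the statement through complementation and then invoke the fact, established earlier in the excerpt, that an anti-topology is anti-closed under arbitrary intersections. Since each $A_i \in \mathcal{T}_{Cl}$, by definition its complement $-A_i$ belongs to $\mathcal{T}$. In this way the whole problem is transported from the anti-closed side to the anti-open side, where the behaviour of unions and intersections is already under control.

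First I would apply De Morgan's law to write $-\bigcup_{i \in J} A_i = \bigcap_{i \in J} (-A_i)$. Each $-A_i$ lies in $\mathcal{T}$, so $\{-A_i\}_{i \in J}$ is a family of anti-open sets. By the lemma asserting that anti-closure under finite intersections forces anti-closure under arbitrary intersections, the set $\bigcap_{i \in J}(-A_i)$ does not belong to $\mathcal{T}$. Reading this back through the complement gives $-\bigcup_{i \in J} A_i \notin \mathcal{T}$, which is precisely the assertion that $\bigcup_{i \in J} A_i \notin \mathcal{T}_{Cl}$. Once the reduction is set up, this is essentially a one-line argument.

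The step that genuinely requires care — and which I expect to be the only real obstacle — is the non-triviality hypothesis lurking in the background. If the family $\{A_i\}$ consists of copies of a single set (in particular if $J$ is a singleton), then $\bigcup_{i \in J} A_i = A_i \in \mathcal{T}_{Cl}$ and the conclusion fails outright; this is exactly the degeneracy that the convention on non-trivial families is meant to exclude. I would therefore state at the outset that $\{A_i\}$ contains at least two distinct members, and then verify that this property passes to the complements: if $A_i \neq A_j$ then $-A_i \neq -A_j$, so $\{-A_i\}_{i \in J}$ is likewise non-trivial and the arbitrary-intersection lemma applies as intended. With this bookkeeping in place, the remainder is just the De Morgan dualization described above.
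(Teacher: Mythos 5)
Your proof is correct and takes essentially the same route as the paper's: complement the union, apply De Morgan's law to obtain $\bigcap_{i \in J}(-A_i)$, and conclude from the anti-closedness of $\mathcal{T}$ under (arbitrary) intersections that this set cannot lie in $\mathcal{T}$ — the paper merely phrases this as a contradiction argument rather than a direct dualization, and cites the arbitrary-intersection lemma implicitly where you cite it explicitly. Your bookkeeping on non-triviality (at least two distinct $A_i$, and distinctness passing to complements) is a refinement the paper's statement glosses over, not a different approach.
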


\begin{proof}
Assume that $\bigcup_{i \in J} A_i \in \mathcal{T}_{Cl}$. Then $-\bigcup_{i \in J} A_i \in \mathcal{T}$. Hence (by virtue of De Morgan's laws) $\bigcap_{i \in J} (-A_i) \in \mathcal{T}$. But for any $i \in J$, $-A_i \in \mathcal{T}$, hence their intersection should be beyond $\mathcal{T}$. This is contradiction. 
\end{proof}

\section{Anti-interior and anti-closure}

In this section we define anti-interior and anti-closure of a set in anti-topological space. 

\begin{definition}
Assume that $(X, \mathcal{T})$ is an anti-topological space and $A \subseteq X$. Then we define anti-interior of $A$ (that is, $aInt(A)$) and its anti-closure (namely, $aCl(A)$) as follows:

\begin{enumerate}
\item $aInt(A) = \bigcup \{U; U \subseteq A \text{ and } U \in \mathcal{T}\}$. 

\item $aCl(A) = \bigcup \{F; A \subseteq F \text{ and } F \in \mathcal{T}_{Cl} \}$. 

\end{enumerate}

\end{definition}

\begin{example}
\label{wnetrze}
Some examples of anti-interior and anti-closure are presented below:

\begin{enumerate}
\item Let $(X, \mathcal{T})$ be like in Example \ref{exlist} (1). Consider $A = \{1, 2, 3\}$. Then $aInt(A) = \{1, 2\} \cup \{2, 3\} = A \notin \mathcal{T}$. As we can see, anti-interior may not be anti-open. Now $aCl(A) = \bigcap \emptyset = X$. 

\item Let $(X, \mathcal{T})$ be like in Example \ref{exlist} (3). Consider $A = \{a, b, c\}$. Now $aInt(A) = \{a, b\}$ and $aCl(A) = \{a, b, c, d\}$. 

\item Let $(X, \mathcal{T})$ be like in Example \ref{exlist} (4). Note that for any $A \subseteq \mathbb{N}$ such that $|A| \geq k \in \mathbb{N}^{+}$, the following holds: $aInt(A) = A$. This is because $A$ is of the form $\{a_1, a_2, ..., a_n \}$, $n > k$. Hence, it can be presented as a union of all its subsets of cardinality $k$. For example, if $k = 2$ and $A = \{10, 12, 20\}$, then $aInt(A) = \{10, 12\} \cup \{10, 20\} \cup \{12, 20\} = A$. 
\end{enumerate}

\end{example}

We may easily predict some of the basic properties of anti-interior and anti-closure.

\begin{theorem}
\label{properties}
Let $(X, \mathcal{T})$ be an anti-topological space. Let $A \subseteq X$. Then the following statements are true:

\begin{enumerate}
\item $aInt(A) \subseteq A$. 
\item If $A \in \mathcal{T}$, then $aInt(A) = A$. 
\item If $A \subseteq B$, then $aInt(A) \subseteq aInt(B)$ (monotonicity). 
\item $aInt(aInt(A)) = aInt(A)$ (idempotence). 
\item $A \subseteq aCl(A)$. 
\item If $-A \in \mathcal{T}$, then $aCl(A) = A$. 
\item If $A \subseteq B$, then $aCl(A) \subseteq aCl(B)$. 
\item $aCl(aCl(A)) = aCl(A)$. 
\item $-aInt(A) = aCl(-A)$. 
\item $aInt(-A) = -aCl(A)$.
\item $x \in aInt(A)$ if and only if there is $U \in \mathcal{T}$ such that $x \in U \subseteq A$. 
\item $x \in aCl(A)$ if and only if $U \cap A \neq \emptyset$ for any $U \in \mathcal{T}$ such that $x \in U$. 
\end{enumerate}
\end{theorem}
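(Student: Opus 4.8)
The plan is to prove the twelve statements in two clusters: I would establish the interior properties (1)--(4) together with the pointwise characterization (11) directly from the definition, and then obtain the closure properties (5)--(8) and (12) as purely formal consequences of the duality identities (9)--(10). (I read $aCl$ as the \emph{intersection} $aCl(A)=\bigcap\{F; A\subseteq F, F\in\mathcal{T}_{Cl}\}$, since the computation $aCl(A)=\bigcap\emptyset=X$ in Example \ref{wnetrze}(1) requires it; with this convention an empty defining family yields $X$, so that (5) holds even when no anti-closed set contains $A$.)

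First I would dispatch the easy interior facts. Statement (1) is immediate, since every $U$ occurring in the defining union satisfies $U\subseteq A$; statement (11) merely unwinds the set-theoretic meaning of membership in a union. For (2), if $A\in\mathcal{T}$ then $A$ is itself an admissible $U$, so $A\subseteq aInt(A)$, and (1) forces equality. Statement (3) follows because $A\subseteq B$ makes the family defining $aInt(A)$ a subfamily of the one defining $aInt(B)$, whence the unions are nested.

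The one genuinely delicate point is idempotence (4), and here the anti-topological setting departs from the classical one: as Example \ref{wnetrze}(1) shows, $aInt(A)$ \emph{need not} be anti-open, so I cannot simply invoke (2). Instead I would show that the two relevant defining families coincide. If $U\in\mathcal{T}$ and $U\subseteq A$, then $U\subseteq aInt(A)$ by definition, so $U$ is admissible for $aInt(aInt(A))$; conversely, if $V\in\mathcal{T}$ and $V\subseteq aInt(A)$, then $V\subseteq aInt(A)\subseteq A$ by (1), so $V$ is admissible for $aInt(A)$. Hence $\{U\in\mathcal{T}; U\subseteq A\}=\{V\in\mathcal{T}; V\subseteq aInt(A)\}$ and the two interiors are literally the same union. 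This is the main obstacle, and the lesson is that openness of the interior is never actually needed---only the sandwiching $U\subseteq aInt(A)\subseteq A$.

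Finally I would establish the duality and deduce the remaining statements. For (9), complementing the union defining $aInt(A)$ turns it into an intersection, and the substitution $F=-U$ converts the conditions $U\subseteq A$, $U\in\mathcal{T}$ into $-A\subseteq F$, $F\in\mathcal{T}_{Cl}$, yielding $-aInt(A)=aCl(-A)$; statement (10) is the same identity applied to $-A$ and then complemented. With (9)--(10) available, each closure fact reduces to an interior fact via $aCl(A)=-aInt(-A)$: (5) comes from complementing (1) for $-A$, (6) from (2), (7) from (3) (using that complementation reverses inclusions), and (8) from (4). Statement (12) follows by negation: $x\notin aCl(A)$ iff $x\in aInt(-A)$ by (10), iff by (11) some $U\in\mathcal{T}$ has $x\in U\subseteq -A$, i.e. $x\in U$ with $U\cap A=\emptyset$; negating this gives exactly the stated criterion.
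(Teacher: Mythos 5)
Your proposal is correct, but it takes a genuinely different route from the paper. The paper's entire proof is a reduction-by-citation: it observes that anti-topologies are a special case of generalized weak structures (arbitrary families $\mathfrak{g} \subseteq P(X)$), for which all twelve properties are already established in \cite{AVILA} and \cite{JAMUNA}, ``the only important thing'' being that interior and closure are defined in the standard manner. You instead prove everything from scratch: the interior facts (1)--(4), (11) directly, with the family-coincidence argument $\{U\in\mathcal{T}; U\subseteq A\}=\{V\in\mathcal{T}; V\subseteq aInt(A)\}$ carrying idempotence without any appeal to openness of the interior, and then the closure facts (5)--(8), (12) as formal corollaries of the De Morgan duality (9)--(10). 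What the paper's approach buys is brevity and placement of the result in the literature; what yours buys is a self-contained verification that makes two things explicit which the paper leaves implicit: first, that no axiom of Definition \ref{antidef} is ever used (your proof visibly never invokes conditions (1)--(3), which is exactly why the result holds for arbitrary families); second, that the definition of $aCl$ as printed (a union of anti-closed supersets) must be read as an intersection --- your observation that Example \ref{wnetrze}(1) computes $aCl(A)=\bigcap\emptyset=X$, and that property (5) would otherwise fail when no anti-closed set contains $A$, is a genuine repair of a typo that the paper's citation-style proof silently papers over.
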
 

\begin{proof}
All these properties are true in any generalized weak structure (see \cite{AVILA} and \cite{JAMUNA}). It means that they are true for any $\mathfrak{g} \subseteq P(X)$. Hence, they are true in anti-topological framework too. The only important thing is to define interior and closure in a standard manner.
\end{proof}

\begin{lemma}
\label{inter}
Assume that $(X, \mathcal{T})$ is an anti-topological space. Then $aInt(A \cap B) \subseteq aInt(A) \cap aInt(B)$.
\end{lemma}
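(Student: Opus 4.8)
The plan is to reduce the claim entirely to the monotonicity property of anti-interior that has already been secured in Theorem \ref{properties}(3), so that no direct manipulation of the defining union is really needed. The key observation is simply that $A \cap B \subseteq A$ and $A \cap B \subseteq B$ as a matter of elementary set theory, and monotonicity then transports these two inclusions through the operator $aInt$.

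Concretely, I would proceed in the following order. First, from $A \cap B \subseteq A$ together with Theorem \ref{properties}(3) I obtain $aInt(A \cap B) \subseteq aInt(A)$. Second, from $A \cap B \subseteq B$ and the same monotonicity I obtain $aInt(A \cap B) \subseteq aInt(B)$. Third, since a set contained in each of two sets is contained in their intersection, these two inclusions combine to give $aInt(A \cap B) \subseteq aInt(A) \cap aInt(B)$, which is exactly the assertion.

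An equivalent route, if one prefers to argue straight from the definition rather than quoting monotonicity, is to take an arbitrary $U \in \mathcal{T}$ with $U \subseteq A \cap B$; then $U \subseteq A$ and $U \subseteq B$, so $U$ belongs to both of the families $\{U; U \subseteq A,\ U \in \mathcal{T}\}$ and $\{U; U \subseteq B,\ U \in \mathcal{T}\}$ whose unions define $aInt(A)$ and $aInt(B)$. Hence every such $U$ lies in $aInt(A) \cap aInt(B)$, and so does the union $aInt(A \cap B)$ of all of them.

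I do not expect any genuine obstacle here: the statement asserts only an inclusion (not the equality $aInt(A \cap B) = aInt(A) \cap aInt(B)$), and inclusion is precisely what monotonicity delivers for free. It is worth flagging, though, that the reverse inclusion generally \emph{fails} in the anti-topological setting, so no attempt should be made to upgrade the result to an equality; the one-sided containment is the sharp statement.
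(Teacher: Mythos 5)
Your proposal is correct and follows essentially the same route as the paper: the paper's proof likewise applies the monotonicity of $aInt$ (Theorem \ref{properties}(3)) to the inclusions $A \cap B \subseteq A$ and $A \cap B \subseteq B$ and then intersects the results. Your alternative definitional argument is also valid, but it is just an unfolding of the same idea.
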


\begin{proof}
Let $A, B \subseteq X$. Now $A \cap B \subseteq A$, $A \cap B \subseteq B$. From the monotonicity of interior we get that $aInt(A \cap B) \subseteq aInt(A)$ and $aInt(A \cap B) \subseteq aInt(B)$. Hence $aInt(A \cap B) \subseteq aInt(A) \cap aInt(B)$. 
\end{proof}

Note that the lemma above is true in any generalized weak structure. The converse is not necessarily true. Consider $X = \{1, 2, 3, 4, 5\}$, $\mathcal{T} = \{ \{1, 3\}, \{2\}, \{3, 4\} \}$, $A = \{1, 2, 3\}$ and $B = \{2, 3, 4\}$. We have $aInt(A) = \{1, 3\} \cup \{2\} = \{1, 2, 3\}$ and $aInt(B) = \{2\} \cup \{3, 4\} = \{2, 3, 4\}$. Moreover, $A \cap B = \{2, 3\}$ and $aInt(A \cap B) = \{2\}$. Now $aInt(A) \cap aInt(B) = \{2, 3\} \nsubseteq \{2\}$. 

\begin{remark}
Note that in topological spaces the converse of the lemma analogous to Lemma \ref{inter} could be proved in the following manner. First, $Int(A) \cap Int(B) \subseteq A \cap B$. This is obvious. Second, $Int(Int(A) \cap Int(B)) \subseteq Int(A \cap B)$. However, both $Int(A)$ and $Int(B)$ are open, hence their intersection is open too. Thus $Int(Int(A) \cap Int(B)) = Int(A) \cap Int(B)$ and we are done. 

This proof is not true in these spaces where interior may not be open. However, if we assume that our generalized weak structure (say, $\mathfrak{g}$) is closed under finite intersections (like in the case of infra-topologies), then we may use the following reasoning. Let $x \in Int(A) \cap Int(B)$. So there are $C, D \in \mathfrak{g}$ such that $x \in C \cap D$, $C \subseteq A$ and $D \subseteq B$. Now $C \cap D$ is open and contained in $A \cap B$. Thus $x \in Int(A \cap B)$. 

\end{remark}

\begin{remark}
As for the Lemma \ref{inter}, it can be easily generalized to the following form (see \cite{WITCZAK}): assume that $(X, \mathcal{T})$ is an anti-topological space and $\{A_i\}_{i \in J \neq \emptyset}$ is a family of sets. Then $aInt(\bigcap_{i \in J} A_i) \subseteq \bigcap_{i \in J} aInt(A_i)$. 

An analogous lemma may be formulated for generalized weak structures too. 
\end{remark}

\begin{lemma}
Let $(X, \mathcal{T})$ be an anti-topological space and $A, B \subseteq X$. Then $aInt(A) \cup aInt(B) \subseteq aInt(A \cup B)$. 
\end{lemma}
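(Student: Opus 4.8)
The plan is to mirror the proof of Lemma \ref{inter}, replacing intersections by unions and reversing the direction of the inclusions accordingly. The only tool I expect to need is the monotonicity of anti-interior, which has already been recorded as part (3) of Theorem \ref{properties}. In fact the whole argument is the exact union-counterpart of the (easy) intersection direction in Lemma \ref{inter}.

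First I would observe that $A \subseteq A \cup B$ and $B \subseteq A \cup B$. Applying monotonicity to each of these two inclusions gives $aInt(A) \subseteq aInt(A \cup B)$ and $aInt(B) \subseteq aInt(A \cup B)$. Since both $aInt(A)$ and $aInt(B)$ are then contained in the single set $aInt(A \cup B)$, their union is contained in it as well, and this is precisely the assertion $aInt(A) \cup aInt(B) \subseteq aInt(A \cup B)$.

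I do not anticipate any genuine obstacle: the statement follows purely from monotonicity and does not invoke the defining anti-closure conditions (2) or (3) of Definition \ref{antidef}, so the same reasoning is valid verbatim in an arbitrary generalized weak structure $\mathfrak{g} \subseteq P(X)$. The point worth flagging—by analogy with the discussion after Lemma \ref{inter}—is that the reverse inclusion is generally false: the set $aInt(A \cup B)$ may absorb an anti-open set that meets both $A$ and $B$ without being contained in either, so that it belongs to neither $aInt(A)$ nor $aInt(B)$. A small finite example, in the same spirit as the one supplied after Lemma \ref{inter}, would make the strictness explicit, but it is not needed for the inclusion claimed here.
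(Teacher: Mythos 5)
Your proof is correct. It takes a slightly different route from the paper: the paper argues pointwise, taking $x \in aInt(A) \cup aInt(B)$, assuming without loss of generality that there is an anti-open $C \in \mathcal{T}$ with $x \in C \subseteq A$, and then observing that $C \subseteq A \cup B$ forces $x \in aInt(A \cup B)$ — essentially unwinding the definition of anti-interior (Theorem \ref{properties}~(11)). You instead invoke monotonicity (Theorem \ref{properties}~(3)) twice, applied to $A \subseteq A \cup B$ and $B \subseteq A \cup B$, and conclude via the elementary fact that two subsets of a common set have their union inside it. The two arguments rest on the same underlying observation, but yours is more modular: it delegates all definition-chasing to the already-recorded monotonicity property, which also makes transparent your (correct) side remark that the result holds verbatim in any generalized weak structure $\mathfrak{g} \subseteq P(X)$, since Theorem \ref{properties} is stated at that level of generality. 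The paper's pointwise proof buys self-containedness at the cost of repeating, in effect, the proof of monotonicity. Your observation that the reverse inclusion can fail is also accurate (e.g.\ in Example \ref{exlist}~(1) take $A = \{1\}$, $B = \{2\}$: both anti-interiors are empty while $aInt(A \cup B) = \{1,2\}$), though, as you note, it is not needed for the claim.
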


\begin{proof}
Let $x \in aInt(A) \cup aInt(B)$. Without loss of generality we may assume that there is some $C \in \mathcal{T}$ such that $C \subseteq A$ and $x \in C$. Then $C \subseteq A \cup B$. Moreover, $C$ is anti-open, hence $x \in aInt(A \cup B)$.
\end{proof}

As for the converse of Theorem \ref{properties} (2), it may be false (as we could see in Example \ref{wnetrze}). Thus we may introduce the following definition (\emph{per analogiam} with infra-topological structures, see \cite{WIT}).

\begin{definition}
Let $(X, \mu)$ be an anti-topological space. Let $A \subseteq X$. If $aInt(A) = A$ then we say that $A$ is \emph{pseudo-anti-open}. If $aInt(A) \in \mathcal{T}$ then we say that $A$ is \emph{anti-genuine}. 
\end{definition}

\begin{lemma}
Let $(X, \mathcal{T})$ be an anti-topological space. Let $A \subseteq X$. If $aInt(A)$ may be written as a union of two or more anti-open sets then $aInt(A) \notin \mathcal{T}$. 
\end{lemma}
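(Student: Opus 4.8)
The plan is to read the hypothesis as the assertion that $aInt(A)$ coincides with a \emph{non-trivial} union of anti-open sets, and then to invoke Condition~(3) of Definition~\ref{antidef} directly. First I would unpack the assumption explicitly: there is a non-empty index set $J$ and sets $U_i \in \mathcal{T}$ (for $i \in J$) with $aInt(A) = \bigcup_{i \in J} U_i$, where at least two of the $U_i$ are distinct. The phrase ``two or more anti-open sets'' must be understood in the sense of the paper's notion of a non-trivial family, namely that the collection $\{U_i\}_{i \in J}$ contains at least two different members; otherwise the union would collapse to a single set and Condition~(3) could not be applied.

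Second, I would observe that $\{U_i\}_{i \in J}$ is precisely the kind of family to which Condition~(3) applies: each $U_i$ belongs to $\mathcal{T}$, the index set is non-empty, and the union is non-trivial. Anti-closure under arbitrary unions then yields $\bigcup_{i \in J} U_i \notin \mathcal{T}$, that is, $aInt(A) \notin \mathcal{T}$. For clarity I would probably cast this as a short argument by contradiction: suppose $aInt(A) \in \mathcal{T}$; then $aInt(A)$ is an anti-open set which, by hypothesis, is simultaneously a non-trivial union of anti-open sets, directly violating Condition~(3), and this contradiction closes the proof.

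The only point requiring any care — and it is really the sole potential obstacle — is the interpretation of ``two or more''. One must ensure that the anti-open sets in question are genuinely distinct, so that the resulting union is non-trivial and Condition~(3) is applicable; were the sets allowed to coincide, the hypothesis would carry no content. Beyond this bookkeeping the statement is an immediate consequence of the definition of anti-topology, since no property of $aInt$ is used other than that it is being presented as such a union.
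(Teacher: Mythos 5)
Your proof is correct; the paper itself states this lemma without any proof, treating it as an immediate consequence of Condition~(3) of Definition~\ref{antidef}, and your argument is exactly that immediate one. You also handle the one genuine subtlety correctly: reading ``two or more anti-open sets'' as a non-trivial family (at least two \emph{distinct} members), which is precisely what is needed for Condition~(3) to apply.
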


Note that in Example \ref{wnetrze} (1) the set $\{1, 2, 3\}$ is pseudo-anti-open and is not anti-genuine. In Example \ref{wnetrze} (2) the set $\{a, b, c\}$ is anti-genuine and is not pseudo-anti-open. 

Let us check some properties of pseudo-anti-open and anti-genuine sets.

\begin{lemma}
Let $(X, \mathcal{T})$ be an anti-topological space. Then every anti-open set is pseudo-anti-open and anti-genuine. 
\end{lemma}

\begin{lemma}
Assume that $(X, \mathcal{T})$ is an anti-topological space and $\{A_i\}_{i \in J \neq \emptyset}$ is a family of pseudo-anti-open sets. Then $\bigcup_{i \in J} A_i$ is pseudo-anti-open too.
\end{lemma}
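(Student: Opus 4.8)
The plan is to prove the equality $aInt\left(\bigcup_{i \in J} A_i\right) = \bigcup_{i \in J} A_i$ by establishing the two opposite inclusions, one of which comes for free. The inclusion $aInt\left(\bigcup_{i \in J} A_i\right) \subseteq \bigcup_{i \in J} A_i$ holds automatically by Theorem \ref{properties} (1), which guarantees $aInt(B) \subseteq B$ for every $B \subseteq X$. So the whole content of the statement lies in the reverse inclusion $\bigcup_{i \in J} A_i \subseteq aInt\left(\bigcup_{i \in J} A_i\right)$.

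For the reverse inclusion I would argue pointwise, using the characterization in Theorem \ref{properties} (11). Take an arbitrary $x \in \bigcup_{i \in J} A_i$; then $x \in A_k$ for some index $k \in J$. Since $A_k$ is pseudo-anti-open, by definition $aInt(A_k) = A_k$, so $x \in aInt(A_k)$. By Theorem \ref{properties} (11) there is some $U \in \mathcal{T}$ with $x \in U \subseteq A_k$. But $A_k \subseteq \bigcup_{i \in J} A_i$, so this very same $U$ satisfies $x \in U \subseteq \bigcup_{i \in J} A_i$, and a second application of Theorem \ref{properties} (11) yields $x \in aInt\left(\bigcup_{i \in J} A_i\right)$. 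As $x$ was arbitrary, the inclusion follows, and combining it with the first inclusion closes the argument.

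An equivalent route, which I would mention as an alternative, bypasses the pointwise reasoning by invoking the arbitrary-union version of the interior inequality: the lemma $aInt(A) \cup aInt(B) \subseteq aInt(A \cup B)$ generalizes (by the same witnessing argument) to $\bigcup_{i \in J} aInt(A_i) \subseteq aInt\left(\bigcup_{i \in J} A_i\right)$. Since each $A_i$ is pseudo-anti-open, $aInt(A_i) = A_i$, and the left-hand side is exactly $\bigcup_{i \in J} A_i$; the desired inclusion then drops out immediately.

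I do not expect any genuine obstacle here: the statement is essentially a formal consequence of the monotone, union-compatible behaviour of $aInt$ that was already recorded above, and it uses nothing specific to anti-topologies beyond the generalized-weak-structure facts in Theorem \ref{properties}. The only point requiring minor care is that the indexing family $J$ may be infinite, so I would lean on the pointwise characterization (or the arbitrary-union form of the interior inequality) rather than on the finite-union lemma taken literally.
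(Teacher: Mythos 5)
Your proof is correct and takes essentially the same approach as the paper's: both reduce the claim to showing that each point $x$ of $\bigcup_{i \in J} A_i$ lies in its anti-interior, by using pseudo-anti-openness of the member $A_k$ containing $x$ to extract an anti-open witness $U$ with $x \in U \subseteq A_k \subseteq \bigcup_{i \in J} A_i$. The only difference is presentational: the paper frames the second inclusion as a proof by contradiction, whereas you argue directly (and somewhat more cleanly, since the paper's contradiction is left implicit).
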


\begin{proof}
Clearly, $aInt(\bigcup_{i \in J} A_i) \subseteq \bigcup_{i \in J} A_i$. Now assume that $x \in \bigcup_{i \in J} A_i$ but $x \notin aInt(\bigcup_{i \in J} A_i$. Hence there is some $k \in J$ such that $x \in A_k$ but for any anti-open $G \subseteq \bigcup_{i \in J} A_i$, $x \notin G$. However, $A_k = Int(A_k)$. Hence, there is $B \in \mathcal{T}$ such that $x \in B \subseteq A$. But then $B \subseteq \bigcup_{i \in J} A_i$. 
\end{proof}

\begin{lemma}
Assume that $(X, \mathcal{T})$ is an anti-topological space and $A, B \subseteq X$ are anti-genuine. Assume that $aInt(A \cap B)$ is different than $aInt(A)$ and $aInt(B)$. Then $A \cap B$ is not anti-genuine.
\end{lemma}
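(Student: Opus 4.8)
The plan is to reduce the statement to the earlier lemma asserting that a proper subset of an anti-open set cannot itself be anti-open. First I would unpack the definitions: saying that $A$ and $B$ are anti-genuine means precisely $aInt(A) \in \mathcal{T}$ and $aInt(B) \in \mathcal{T}$, while saying that $A \cap B$ \emph{is} anti-genuine would mean $aInt(A \cap B) \in \mathcal{T}$. Consequently the whole claim amounts to showing that $aInt(A \cap B) \notin \mathcal{T}$, and this is the single target I would aim for.

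Next I would invoke Lemma \ref{inter}, which gives $aInt(A \cap B) \subseteq aInt(A) \cap aInt(B)$; in particular $aInt(A \cap B) \subseteq aInt(A)$. By the hypothesis $aInt(A \cap B) \neq aInt(A)$, this inclusion is strict, so $aInt(A \cap B) \subsetneq aInt(A)$. Since $A$ is anti-genuine, the larger set $aInt(A)$ belongs to $\mathcal{T}$. I would then apply the earlier lemma stating that if $B \in \mathcal{T}$ and $A \subsetneq B$ then $A \notin \mathcal{T}$ (whose proof simply writes $A$ as the non-trivial binary intersection $A = A \cap B$). With $B$ there replaced by $aInt(A)$ and $A$ by $aInt(A \cap B)$, this immediately yields $aInt(A \cap B) \notin \mathcal{T}$, i.e.\ $A \cap B$ is not anti-genuine.

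I do not expect any real obstacle in this argument; it is a short two-step deduction. The only point demanding a little care is the \emph{strictness} of the inclusion: the subset lemma applies only to proper subsets, because the anti-closure-under-intersections condition is imposed on non-trivial intersections, and $aInt(A \cap B) = aInt(A \cap B) \cap aInt(A)$ is non-trivial exactly when $aInt(A \cap B) \neq aInt(A)$. This is precisely what the assumption supplies. It is worth remarking that the hypothesis $aInt(A \cap B) \neq aInt(B)$ is not needed for this route—the symmetric argument running through $aInt(B)$ would serve equally well—so the genuinely load-bearing inputs are the anti-openness of $aInt(A)$ and the strictness of the corresponding inclusion.
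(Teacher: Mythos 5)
Your proposal is correct and follows essentially the same route as the paper: the paper also assumes $aInt(A \cap B) \in \mathcal{T}$ and derives a contradiction by writing $aInt(A \cap B) = aInt(A \cap B) \cap aInt(A)$ as a non-trivial intersection of two anti-open sets, which is exactly the content of the subset lemma you invoke. Your side remarks are also consistent with the paper's proof, which likewise uses only the inequality $aInt(A \cap B) \neq aInt(A)$ and relies on strictness to make the intersection non-trivial.
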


\begin{proof}
If $A$ and $B$ are anti-genuine, then $aInt(A) \in \mathcal{T}$ and $aInt(B) \in \mathcal{T}$. We already know that $aInt(A \cap B) \subseteq aInt(A) \cap aInt(B)$. Now assume that $aInt(A \cap B) \in \mathcal{T}$. But $aInt(A \cap B)$ may be written as $aInt(A \cap B) \cap aInt(A)$. This is an intersection of two different anti-open sets, hence it cannot belong to $\mathcal{T}$. 
\end{proof}

The assumption expressed in the second sentence of this lemma is important. Consider $\mathcal{T}$ from Example \ref{exlist} (3). Let $A = \{a, b, c\}$, $B = \{c, d\}$ and $C = \{a, b\}$. All these sets are anti-genuine. Now $aInt(A \cap B) = aInt(\{c\}) = \emptyset \notin \mathcal{T}$. Clearly, $aInt(A \cap B) \neq aInt(A)$ and $aInt(A \cap B) \neq aInt(B)$. On the contrary, $aInt(A \cap C) = aInt(C) = \{a, b\} \in \mathcal{T}$.  

It is possible that the union of two anti-genuine sets is not anti-genuine. Consider the same $\mathcal{T}$. We see that $aInt(A \cup B) = aInt(\{a, b, c, d\}) = \{a, b, c, d\} \notin \mathcal{T}$. But this is not general because $aInt(A \cup C) = aInt(\{a, b, c\}) = \{a, b\} \in \mathcal{T}$. \\

\quad \\

One can use anti-interior and anti-closure to define other classes of sets. In general, this is beyond the scope of this initial research but we may show some clues. 

\begin{definition}
Let $(X, \mathcal{T})$ be an anti-topological space and $A \subseteq X$. We say that $A$ is semi-open if and only if $A \subseteq aCl(aInt(A))$. 
\end{definition}

\begin{remark}
The idea of semi-open sets is taken from other families (like topologies or generalized topologies). Modak used it in the context of weak structures (see \cite{WEAK}). However, he wrote in Lemma 3.1. that if $A$ is semi-open in such structure, then $Int(A) \neq \emptyset$. But we may consider the following weak structure: $X = \{a, b, c\}$ and $\omega = \{ \emptyset, \{a\} \}$. Now consider $A = \{b, c\}$. Of course $Int(A) = \emptyset$. Then $Cl(\emptyset) = \{a, b, c\} \cap \{b, c\} = \{b, c\} = A$. Hence $Int(A) = \emptyset$ and $A \subseteq Cl(Int(A)) = A$. The closure of empty set need not to be empty in weak structure. 

We may adjust this example to anti-topologies. Take the same $X$ and $\mathcal{T} = \{ \{a\} \}$. Take the same $A$. Now $aInt(A) = \emptyset$ and $aCl(\emptyset) = \{b, c\} = A$. 
\end{remark}

\section{Door anti-topologies}

Door spaces (when defined in topological or supra-topological environment) are defined by the assumption that each subset is open or closed. This definition is not reasonable in anti-topological context because $\emptyset$ and $X$ are never open nor closed. However, we can make it more useful.

\begin{definition}
Let $(X, \mathcal{T})$ be an anti-topological space. We say that $(X, \mathcal{T})$ is \emph{door anti-topological space} if and only if each subset (different than $\emptyset$ and $X$) is anti-open or anti-closed. 
\end{definition}

\begin{example}
Here there are some examples of door spaces:

\begin{enumerate}

\item $X = \{a, b\}$, $\mathcal{T} = \{ \{a\}, \{b\} \}$. 

\item $X = \{a, b, c\}$, $\mathcal{T} = \{ \{a\}, \{b\}, \{c\} \}$. 

\item $X = \{a, b, c\}$, $\mathcal{T} = \{ \{a, b\}, \{a, c\}, \{b, c\} \}$. 

\end{enumerate}
\end{example}

We may prove the following theorem:

\begin{theorem}
Assume that $(X, \mathcal{T})$ is an anti-topological space such that $|X| > 3$. Then $(X, \mathcal{T})$ cannot be door space. 
\end{theorem}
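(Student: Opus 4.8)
The plan is to exploit the fact that both the anti-open sets and the anti-closed sets form antichains under inclusion, and then run a pigeonhole argument on a single maximal chain in $P(X)$. The door condition forces every proper nonempty subset into one of two antichains, but a long enough chain cannot be covered by two antichains, and $|X|>3$ is exactly the threshold at which the chain becomes too long.

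First I would record the two antichain facts. From the earlier lemma stating that a subset of an anti-open set is not anti-open, no two distinct members of $\mathcal{T}$ are comparable: if $A, B \in \mathcal{T}$ with $A \subsetneq B$, then $A = A \cap B$ is a non-trivial intersection, so $A \notin \mathcal{T}$, a contradiction. Dually, using the lemma that for distinct $A, B \in \mathcal{T}_{Cl}$ one has $A \cap B \notin \mathcal{T}_{Cl}$, no two distinct anti-closed sets are comparable either: if $A \subsetneq B$ with $A, B \in \mathcal{T}_{Cl}$, then $A \cap B = A \in \mathcal{T}_{Cl}$ contradicts that lemma. Thus any chain of subsets of $X$ can contain at most one anti-open set and at most one anti-closed set.

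Next I would build a concrete chain. Writing $n = |X| \geq 4$ and choosing distinct elements $x_1, \dots, x_n$ of $X$, I form the strictly increasing chain
\[
\emptyset \subsetneq \{x_1\} \subsetneq \{x_1,x_2\} \subsetneq \cdots \subsetneq \{x_1,\dots,x_{n-1}\} \subsetneq X .
\]
The members lying strictly between $\emptyset$ and $X$, namely $\{x_1\}, \{x_1,x_2\}, \dots, \{x_1,\dots,x_{n-1}\}$, are exactly $n-1$ proper nonempty subsets, and they form a chain. If $(X,\mathcal{T})$ were a door anti-topological space, each of these $n-1$ sets would have to be anti-open or anti-closed. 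By the antichain observation, however, at most one of them can be anti-open and at most one can be anti-closed, so at most $2$ of the $n-1$ sets can be covered. Since $n \geq 4$ gives $n-1 \geq 3 > 2$, at least one set in the chain is neither anti-open nor anti-closed, contradicting the door property.

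I do not expect a genuine obstacle here; the argument is essentially a two-antichain pigeonhole. The only point requiring care is bookkeeping at the boundary: the door condition is imposed only on subsets different from $\emptyset$ and $X$, so I must make sure the chain contributes exactly its $n-1$ intermediate members (and no more) to the count, which is precisely why the bound $2 < n-1$ kicks in only for $n > 3$ and why the smaller cases $|X| \in \{2,3\}$ genuinely admit door spaces, as the preceding examples show.
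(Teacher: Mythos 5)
Your proof is correct and rests on exactly the same two facts as the paper's own proof---that $\mathcal{T}$ and $\mathcal{T}_{Cl}$ are antichains under inclusion, applied to a chain of three proper nonempty subsets---with the paper merely packaging the pigeonhole as a case analysis on whether $\{x\}$ is anti-open or anti-closed and then forcing $\{x,y\}$ and $\{x,y,z\}$ into the opposite class. One cosmetic caveat: writing the chain with $n = |X|$ terms tacitly assumes $X$ is finite, but since your count only ever needs the three intermediate sets $\{x_1\} \subsetneq \{x_1,x_2\} \subsetneq \{x_1,x_2,x_3\} \neq X$, the argument goes through verbatim for infinite $X$ as well.
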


\begin{proof}
Take arbitrary $x, y, z \in X$ and consider $\{x\}$. Assume that $\{x\}$ is anti-open. Now $\{x, y\}$ cannot be open (as a non-trivial superset of anti-open set). Moreover, it must be anti-closed (because of the door property). The same can be said about $\{x, y, z\}$ (note that this set is different than the whole universe because we assumed that $|X| > 3$). Now $\{x, y, z\} \cap \{x, y\} = \{x, y\}$. On the one hand, it is anti-closed. On the other, it cannot be anti-closed because any non-trivial intersection of two anti-closed sets is not anti-closed. Contradiction.

Now assume that $\{x\}$ is anti-closed. Then $\{x, y\}$ cannot be anti-closed. Hence, it must be anti-open. The same can be said about $\{x, y, z\}$. But then $\{x, y\} \cap \{x, y, z\} = \{x, y\}$ cannot be anti-open. Again, contradiction.
\end{proof}

\section{Continuity}

In this section we show some initial results on anti-continuity.

\begin{definition}
Let $(X, \mathcal{T})$ and $(Y, \mathcal{S})$ be two anti-topological spaces. We say that a function $f: X \to Y$ is \emph{anti-continuous} if and only if for any $A \in \mathcal{S}$, $f^{-1}(A) \in \mathcal{T}$.
\end{definition}

\begin{example}
Below there are some examples of anti-continuity.

\begin{enumerate}
\item Let $X = \{1, 2, 3, 4\}$, $\mathcal{T} = \{ \{1, 2\}, \{3, 4\}, \{5\} \}$, $Y = \{a, b, c, d, e\}$, $\mathcal{S} = \{ \{a, b\}, \{c, d\}, \{e\} \}$. Let $f$ be defined in the following manner: $f(1) = a$, $f(2) = b$, $f(3) = c$, $f(4) = d$ and $f(5) = e$. 

Now $f^{-1}(\{a, b\}) = \{1, 2\} \in \mathcal{T}$, $f^{-1}(\{c, d\}) = \{3, 4\} \in \mathcal{T}$ and $f^{-1}(\{e\}) = \{5\} \in \mathcal{T}$. 

\item Let $X = \{1, 2, 3, 4\}$, $\mathcal{T} = \{ \{1, 2\}, \{3\} \}$, $Y = \{a, b, c, d, e\}$, $\mathcal{S} = \{ \{a, b, c, d\}, \{e\} \}$. Let $f(1) = a$, $f(2) = b$ and $f(3) = e$. 

Now $f^{-1}(\{a, b, c, d\}) = \{1, 2\} \in \mathcal{T}$ and $f^{-1}(\{e\}) = \{3\} \in \mathcal{T}$. 

\item Let $X = \mathbb{N}$, $\mathcal{T} = \{ \{0, 1\}, \{1, 2\}, \{2, 3\}, ... \}$. Let $Y = \mathbb{R}^{+} \cup \{0\}$ and $\mathcal{S}$ be a collection of all closed intervals of length $1$ such that their endpoints are natural numbers. For example, $[0, 1]$, $[1, 2]$, $[2, 3]$ belong to $\mathcal{S}$. Let $f: X \to Y$ be defined as $f(n) = n$. Now $f^{-1}([0, 1]) = \{0, 1\} \in \mathcal{T}$, $f^{-1}([3,4]) = \{3, 4\} \in \mathcal{T}$ etc.

\end{enumerate}

\end{example}

Another type of continuity is this one.

\begin{definition}
Let $(X, \mathcal{T})$ and $(Y, \mathcal{S})$ be two anti-topological spaces. We say that a function $f: X \to Y$ is \emph{point-anti-continuous} if and only if for any $x \in X$ and for any $V \in \mathcal{S}$ such that $f(x) \in V$, there is $U \in \mathcal{T}$ such that $x \in U$ and $f(U) \subseteq V$. 

\end{definition}

\begin{theorem}
Let $(X, \mathcal{T})$ and $(Y, \mathcal{S})$ be two anti-topological spaces. Assume that $f: X \to Y$ is anti-continuous. Then it is point-anti-continuous too.
\end{theorem}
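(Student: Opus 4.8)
The plan is to prove this directly from the definitions by taking an arbitrary point and an arbitrary anti-open neighbourhood of its image, and then exhibiting a suitable anti-open neighbourhood in the domain. Fix $x \in X$ and suppose $V \in \mathcal{S}$ satisfies $f(x) \in V$. I need to produce a $U \in \mathcal{T}$ with $x \in U$ and $f(U) \subseteq V$. The obvious candidate, given that all I have to work with is the anti-continuity hypothesis, is $U = f^{-1}(V)$.

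First I would observe that since $V \in \mathcal{S}$ and $f$ is anti-continuous, the definition of anti-continuity gives immediately that $f^{-1}(V) \in \mathcal{T}$. So $U := f^{-1}(V)$ is indeed anti-open, which is the one membership condition that could fail; here it is handed to us for free. Next I would check that $x \in U$: this holds because $f(x) \in V$ means precisely that $x \in f^{-1}(V) = U$, by the definition of preimage. Finally I would verify $f(U) \subseteq V$: if $y \in f(U) = f(f^{-1}(V))$, then $y = f(u)$ for some $u \in f^{-1}(V)$, whence $f(u) \in V$, i.e. $y \in V$. (This uses only the elementary set-theoretic fact that $f(f^{-1}(V)) \subseteq V$ for any function and any set $V$.)

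Putting these three observations together, the set $U = f^{-1}(V)$ witnesses point-anti-continuity at the arbitrary point $x$ and arbitrary neighbourhood $V$, which is exactly what the definition requires. Since $x$ and $V$ were arbitrary, $f$ is point-anti-continuous.

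I do not expect any genuine obstacle here: the statement is essentially the standard implication ``global continuity implies local/pointwise continuity'' transported into the anti-topological setting, and the proof reduces to the canonical choice $U = f^{-1}(V)$ together with the trivial inclusions $x \in f^{-1}(V)$ and $f(f^{-1}(V)) \subseteq V$. The only point worth a moment's care is confirming that $f^{-1}(V)$ really lies in $\mathcal{T}$ rather than merely being some subset of $X$ --- but this is precisely the content of the anti-continuity hypothesis, so no appeal to the anti-topology axioms (the $\notin$ conditions of Definition \ref{antidef}) is needed at all. The reverse implication, by contrast, would fail in general, so I would not attempt to strengthen the statement to an equivalence.
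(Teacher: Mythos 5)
Your proof is correct and follows exactly the paper's own argument: both take $U = f^{-1}(V)$, note it is anti-open by the anti-continuity hypothesis, contains $x$ since $f(x) \in V$, and satisfies $f(f^{-1}(V)) \subseteq V$. No differences worth noting.
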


\begin{proof}
Assume that $x \in X$, $V \in \mathcal{S}$ and $f(x) \in V$. Now $f^{-1}(V) \in \mathcal{T}$ and $x \in f^{-1}(V)$. Moreover, $f(f^{-1}(V)) \subseteq V$. 
\end{proof}

The converse need not to be true. Let $X = \mathbb{N}$ and $\mathcal{T} = \{ \{0\}, \{1\}, \{2\}, \{3\}, ... \}$. Let $Y = \mathbb{R}^{+} \cup \{0\}$ and $\mathcal{S}$ be a collection of all closed intervals of length $1$ such that their endpoints are natural numbers. Let $f: X \to Y$ be defined as $f(n) = n$. Assume now that $k \in \mathbb{N}$, $V \in \mathcal{S}$ and $f(k) \in V$. Then $f(k)$ must be left or right endpoint of an interval $V$. Then there is $U \in \mathcal{T}$, namely $U = \{k \}$ such that $f(U) = \{k\} \subseteq V$. Clearly, $k \in U$. However, if $V = [a, b]$ for some natural $a, b$, then $f^{-1} = \{a, b\} \notin \mathcal{T}$. 

\section{Density and nowhere density}

Let us interpret the notions of density and nowhere density in anti-topological environment. 

\subsection{Anti-density}

The first definition is standard. It relies on the idea of $X$ as the closure of our set.

\begin{definition}
Let $(X, \mathcal{T})$ be an anti-topological space and $A \subseteq X$. We say that $A$ is \emph{anti-dense} if and only if $aCl(A) = X$. 
\end{definition}

\begin{lemma}
Let $(X, \mathcal{T})$ be an anti-topological space and $A \subseteq X$. $A$ is anti-dense if and only if $\mathfrak{Z} = \{B \in \mathcal{T}_{Cl}; A \subseteq B\} = \emptyset$.
\end{lemma}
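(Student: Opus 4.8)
The plan is to rewrite the quantity $aCl(A)$ directly in terms of the family $\mathfrak{Z}$ and then read off both implications from the behaviour of the empty family. Recall that $aCl(A)$ is the intersection of all anti-closed supersets of $A$ --- the reading consistent with the computation $aCl(A)=\bigcap\emptyset=X$ in Example \ref{wnetrze} (1) and with properties (9)--(10) of Theorem \ref{properties} --- so that by definition $aCl(A)=\bigcap\mathfrak{Z}$. The whole argument then rests on two facts: the convention $\bigcap\emptyset=X$ for the empty intersection, and the observation that $X\notin\mathcal{T}_{Cl}$. The latter is immediate, since $X\in\mathcal{T}_{Cl}$ would force $-X=\emptyset\in\mathcal{T}$, contradicting Cond. (1) of Def. \ref{antidef}.

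For the implication from right to left I would simply note that if $\mathfrak{Z}=\emptyset$, then $aCl(A)=\bigcap\mathfrak{Z}=\bigcap\emptyset=X$, so $A$ is anti-dense by definition.

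For the converse I would argue by contraposition. Suppose $\mathfrak{Z}\neq\emptyset$ and pick any $B\in\mathfrak{Z}$. Then $B\in\mathcal{T}_{Cl}$, and by the observation above $B\neq X$, hence $B\subsetneq X$. Since $B$ is one of the sets whose intersection defines $aCl(A)$, we have $aCl(A)=\bigcap\mathfrak{Z}\subseteq B\subsetneq X$, whence $aCl(A)\neq X$ and $A$ is not anti-dense.

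The closest thing to an obstacle here is purely conventional: one must combine the empty-intersection rule $\bigcap\emptyset=X$ with the fact that no anti-closed set can equal the whole universe. Once these are in place the statement is essentially a tautology. It is worth flagging explicitly that the union sign in the displayed definition of $aCl$ should be read as an intersection, since otherwise this lemma (and the cited properties) would fail.
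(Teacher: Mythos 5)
Your proof is correct and follows essentially the same route as the paper's: both directions rest on the convention $\bigcap\emptyset=X$ together with the observation that $X\notin\mathcal{T}_{Cl}$ (since $\emptyset\notin\mathcal{T}$), the only cosmetic difference being that you argue the forward direction by contraposition where the paper derives a contradiction from $\bigcap\mathfrak{Z}=X$ forcing every member of $\mathfrak{Z}$ to equal $X$. Your explicit remark that the $\bigcup$ in the displayed definition of $aCl$ must be read as $\bigcap$ is also consistent with the paper, whose own proof and examples silently use the intersection reading.
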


\begin{proof}
If $Z$ is empty, then $aCl(A) = \bigcap \mathfrak{Z} = \bigcap \emptyset = X$. Now assume that $aCl(A) = X$ and $\mathfrak{Z} \neq \emptyset$. Thus $\bigcap \mathfrak{Z} = X$. This is possible only if for any $B \in \mathfrak{Z}$, $B = X$. But $X \notin \mathcal{T}_{Cl}$ (because $\emptyset \notin \mathcal{T}$). 
\end{proof}

The next theorem gives us alternative interpretation of anti-density.

\begin{theorem}
Let $(X, \mathcal{T})$ be an anti-topological space and $A \subseteq X$. Then $A$ is anti-dense if and only if $A \cap B \neq \emptyset$ for any $B \in \mathcal{T}$. 
\end{theorem}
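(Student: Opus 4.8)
The plan is to deduce the statement directly from the immediately preceding lemma, which already characterizes anti-density by the emptiness of the family $\mathfrak{Z} = \{B \in \mathcal{T}_{Cl}; A \subseteq B\}$. Everything then reduces to rewriting the condition ``$\mathfrak{Z} = \emptyset$'' in terms of anti-\emph{open} sets rather than anti-closed ones, and this is carried out by passing to complements.

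First I would use that complementation is a bijection between $\mathcal{T}$ and $\mathcal{T}_{Cl}$, so that $B \in \mathcal{T}_{Cl}$ exactly when $-B \in \mathcal{T}$. Next I would invoke the elementary set-theoretic equivalence $A \subseteq B \iff A \cap (-B) = \emptyset$, valid for all $A, B \subseteq X$. Putting these together, the existence of an anti-closed $B$ with $A \subseteq B$ is equivalent to the existence of an anti-open set $C = -B$ with $A \cap C = \emptyset$. Hence $\mathfrak{Z} = \emptyset$ holds if and only if no anti-open set is disjoint from $A$, i.e.\ if and only if $A \cap B \neq \emptyset$ for every $B \in \mathcal{T}$; chaining this with the preceding lemma yields the claimed equivalence.

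I do not expect a serious obstacle here; the main point requiring attention is merely to guarantee that $\mathcal{T}$ has no degenerate member, so that the complementation correspondence cannot collapse. This is supplied by Cond.\ (1) of Def.\ \ref{antidef}: since $\emptyset \notin \mathcal{T}$, every $B \in \mathcal{T}$ is non-empty, and $X \notin \mathcal{T}$ ensures $\emptyset = -X \notin \mathcal{T}_{Cl}$. As an alternative I could argue pointwise via Theorem \ref{properties} (12): anti-density means $aCl(A) = X$, i.e.\ every $x \in X$ satisfies the neighbourhood condition of (12); for the forward direction I would, given $B \in \mathcal{T}$, pick any $x \in B$ (possible since $B \neq \emptyset$) and read off $A \cap B \neq \emptyset$, while the converse is immediate because the hypothesis makes (12) hold at every point. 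Both routes are short, but the complementation argument is the cleaner one.
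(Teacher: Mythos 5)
Your proof is correct, but its primary route is organized differently from the paper's. The paper proves the two directions asymmetrically: for left-to-right it argues pointwise via Theorem \ref{properties} (12) (given $B \in \mathcal{T}$, pick $x \in B$, which is possible since $\emptyset \notin \mathcal{T}$; as $x \in X = aCl(A)$, property (12) forces $A \cap B \neq \emptyset$), while for right-to-left it argues by contradiction, extracting from $aCl(A) \neq X$ an anti-closed $D$ with $A \subseteq D$ and observing that $-D \in \mathcal{T}$ is disjoint from $A$. Your complementation argument instead handles both directions at once: the bijection $B \mapsto -B$ between $\mathcal{T}_{Cl}$ and $\mathcal{T}$, together with the equivalence $A \subseteq B \iff A \cap (-B) = \emptyset$, translates the condition $\mathfrak{Z} = \emptyset$ from the preceding lemma directly into the statement that no anti-open set misses $A$. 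This makes the theorem a one-step corollary of that lemma, dispenses with property (12) entirely, and contains the paper's right-to-left argument as one half of the equivalence; what the paper's version buys in exchange is that its forward direction works straight from the definition of $aCl$ without routing through the lemma. Your alternative pointwise route is precisely the paper's own forward direction combined with the argument the paper gives, in the Remark immediately following the theorem, for the converse. One small observation: the non-degeneracy you flag ($\emptyset, X \notin \mathcal{T}$) is not actually needed for the complementation equivalence itself; it matters only inside the lemma you cite (to exclude $X$ from $\mathcal{T}_{Cl}$) and, in the pointwise route, to guarantee $B \neq \emptyset$.
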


\begin{proof}

Let $aCl(A) = X$ and $B \in \mathcal{T}$. Assume that $A \cap B = \emptyset$. Let $x \in B$ (clearly, $B$ is non-empty because it is anti-open). Then $x \in X = aCl(A)$. Thus $A \cap B \neq \emptyset$ (see Lemma \ref{properties} (12)). 

Assume now that $A \cap B \neq \emptyset$ for any $B \in \mathcal{T}$. Suppose that $A$ is not anti-dense. This means that $aCl(A) \neq X$. Hence, there is some $D \in \mathcal{T}_{Cl}$ such that $A \subseteq D$. Of course $D \neq X$. Consider $-D = X \setminus D$. Clearly, $-D \in \mathcal{T}$ and $A \cap (-D) = \emptyset$. Contradiction.

\end{proof}

\begin{remark}
We could use the following reasoning to prove right-to-left direction in the preceding theorem. Assume that $A \cap B \neq \emptyset$ for any $B \in \mathcal{T}$. Suppose that $A$ is not anti-dense. Hence, $aCl(A) \neq X$. Take some $x$ such that $x \in X \setminus aCl(A)$. Now $x \notin aCl(A)$. By the very definition of anti-closure it means that there exists certain anti-open $U$ such that $x \in U$ and $A \cap U = \emptyset$. This is contradiction because $A$ has non-empty intersection with each set from $\mathcal{T}$. 

It seems that Modak used this reasoning in \cite{WEAK} (Theorem 3.1.) with respect to weak structures.
\end{remark}

\begin{lemma}
Let $(X, \mathcal{T})$ be an anti-topological space and $A \subseteq X$. Then $A$ is anti-dense if and only if $aInt(-A) = \emptyset$.

\end{lemma}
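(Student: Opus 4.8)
The plan is to reduce the statement to the duality between anti-interior and anti-closure that has already been established, namely Theorem~\ref{properties}~(10), which asserts $aInt(-A) = -aCl(A)$ for every $A \subseteq X$. Once this identity is invoked, the equivalence becomes essentially a matter of complementation, and no genuinely new work is needed.

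First I would recall that, by definition, $A$ is anti-dense precisely when $aCl(A) = X$. Applying Theorem~\ref{properties}~(10) I would write $aInt(-A) = -aCl(A) = X \setminus aCl(A)$. Hence $aInt(-A) = \emptyset$ holds if and only if $X \setminus aCl(A) = \emptyset$, which is equivalent to $aCl(A) = X$, i.e.\ to $A$ being anti-dense. This settles both directions simultaneously.

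As an alternative route that avoids invoking property (10) explicitly, I would argue directly from the definition of anti-interior together with the preceding characterization of anti-density (the theorem stating that $A$ is anti-dense if and only if $A \cap B \neq \emptyset$ for every $B \in \mathcal{T}$). Since $aInt(-A) = \bigcup \{U \in \mathcal{T}; U \subseteq -A\}$ and every element of $\mathcal{T}$ is non-empty (because $\emptyset \notin \mathcal{T}$ by Definition~\ref{antidef}), this union is empty exactly when no $U \in \mathcal{T}$ satisfies $U \subseteq -A$. As $U \subseteq -A$ is the same condition as $U \cap A = \emptyset$, emptiness of $aInt(-A)$ means that every $U \in \mathcal{T}$ meets $A$; by the previous theorem this is precisely anti-density.

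I do not expect any real obstacle here, as the substance is already contained in Theorem~\ref{properties}~(10). The only point demanding a little care arises in the second approach: one must observe that a union of anti-open sets is empty if and only if the indexing family itself is empty, and this uses the assumption $\emptyset \notin \mathcal{T}$ from Definition~\ref{antidef}. I would therefore favour the first approach for brevity and mention the second only as a self-contained confirmation.
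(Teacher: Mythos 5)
Your primary argument is exactly the paper's own proof: both invoke the duality $aInt(-A) = -aCl(A)$ from Theorem~\ref{properties} and conclude by complementation, so the proposal is correct and essentially identical in approach. Your alternative route via the intersection characterization of anti-density is also sound, but it is supplementary rather than a different proof of record.
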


\begin{proof}
Let $aInt(-A) = \emptyset$. We know that $aInt(-A) = -aCl(A)$. Hence, $-aCl(A) = \emptyset$ and thus $aCl(A) = X$. Now let $aCl(A) = X$. Then $-aCl(A) = \emptyset$. However, $-aCl(A) = aInt(-A)$. 

Note that we used some properties from Lemma \ref{properties}.

\end{proof}

\begin{lemma}
Let $(X, \mathcal{T})$ be an anti-topological space. Assume that $A, B \subseteq X$ are two anti-dense sets. Then their union is anti-dense too. 
\end{lemma}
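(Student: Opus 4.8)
The plan is to reduce the statement to monotonicity of anti-closure, which is already available as Theorem \ref{properties} (7). First I would observe that $A \subseteq A \cup B$, and apply that monotonicity property to obtain $aCl(A) \subseteq aCl(A \cup B)$. Since $A$ is anti-dense we have $aCl(A) = X$, and since the anti-closure of any subset of $X$ is itself a subset of $X$, this forces $aCl(A \cup B) = X$. By the definition of anti-density this is exactly the assertion that $A \cup B$ is anti-dense, so the proof is complete.

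As an alternative route I could argue through the intersection characterisation established in the preceding theorem, namely that a set is anti-dense if and only if it meets every member of $\mathcal{T}$. Since $A \subseteq A \cup B$, any anti-open set $U$ with $U \cap A \neq \emptyset$ automatically satisfies $U \cap (A \cup B) \neq \emptyset$; hence $A \cup B$ meets every anti-open set and is therefore anti-dense. This second approach makes transparent a point worth noting: only one of the two sets needs to be anti-dense for the conclusion to hold, so the hypothesis as stated is in fact stronger than necessary.

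I do not expect any genuine obstacle here, since the result follows immediately from the monotonicity of $aCl$. The only thing to be careful about is to invoke the already-proved property (7) of Theorem \ref{properties} directly, rather than re-deriving monotonicity from the definition of anti-closure, so that the argument stays short and self-contained.
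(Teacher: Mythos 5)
Your main argument is exactly the paper's own proof: observe $A \subseteq A \cup B$, apply monotonicity of $aCl$ (Theorem \ref{properties}~(7)) to get $X = aCl(A) \subseteq aCl(A \cup B) \subseteq X$, and conclude. Your added observation that anti-density of just one of the two sets suffices is correct, and your alternative route via the intersection characterisation is also valid, but neither changes the fact that the core argument coincides with the paper's.
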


\begin{proof}
We know that $X = aCl(A) = aCl(B)$. Of course $A \subseteq (A \cup B)$. Hence $X = aCl(A) \subseteq aCl(A \cup B)$. But then $aCl(A \cup B)$ must be $X$. 
\end{proof}

As for the intersection of two anti-dense setse, it can be anti-dense or not. For example, if $X = \{1, 2, 3, 4\}$ with $\mathcal{T} = \{ \{1, 2\}, \{2, 3\}, \{3, 4\} \}$, then we can consider two anti-dense sets $\{1, 2, 3\}$ and $\{2, 3, 4\}$. Their intersection $\{2, 3\}$ is anti-dense too. On the other hand, consider $X = \{a, b, c, d, e\}$ and $\mathcal{T} = \{ \{a, b\}, \{c, d\}, \{e\} \}$. Take $\{a, c, e\}$ and $\{b, d, e\}$. They are both anti-dense but their intersection is $\{e\}$ and this set is not anti-dense (for example, it does not have non-empty intersection with $\{c, d\}$). 

\subsection{Anti-nowhere density}
Now we may briefly discuss the idea of anti-nowhere density. 

\begin{definition}
Let $(X, \mathcal{T})$ be an anti-topological space and $A \subseteq X$. We say that $A$ is \emph{anti-nowhere-dense} if and only if $aInt(aCl(A)) = \emptyset$. 
\end{definition}

\begin{example}
Let $(X, \mathcal{T})$ be like in Example \ref{exlist} (1). Consider $A = \{1, 4\}$. Now $aCl(A) = \{1, 4\}$ and $aInt(\{1, 4\}) = \bigcup \emptyset = \emptyset$. This set is anti-nowhere dense. However, it does not mean that for any $B \in \mathcal{T}$ we shall find such anti-open $C$ contained in $B$ that $A \cap C = \emptyset$. For example, if $B = \{1, 2\}$, then $A \cap \{1\} \neq \emptyset$ and $\{2\} \notin \mathcal{T}$. 
\end{example}

The last example shows us that it would not be sensible to define anti-nowhere density in terms of empty intersection with at least one anti-open subset of each anti-open set. As we menioned before, anti-open sets do not have proper anti-open subsets.

Of course, in some anti-topologies there are sets which have empty intersection with \emph{any} anti-open set. Consider $X = \{a, b, c, d, e, f\}$ with $\mathcal{T} = \{ \{a, b\}, \{c, d\}, \{e\} \}$. Think about $\{f\}$. Its intersection with any anti-open set is empty. 

Another example: $X = \mathbb{N}^{+}$, $\mathcal{T} = \{ \{1, 3\}, \{5, 7\}, \{9, 11\}, \{13, 15\}, ... \}$. Consider $A = \{2, 4, 6, 8, 10, ...\}$. Its intersection with any anti-open set is empty. Now consider the same universe and $\mathcal{S} = \{ \{1, 2\}, \{2, 3\}, \{3, 4\}, \{4, 5\}, ... \}$. Now it is not possible to find non-empty set with non-empty intersection with every set from $\mathcal{S}$. 

Some other results on (anti-)density, nowhere density and rarity have been presented in \cite{WITCZAK}. 

\section{Conclusion and future work}
In this paper we interpreted some of the basic topological notions in anti-topological setting. In this way we extended the initial research presented in \cite{SAHIN}. We think that now it would be valuable to explore both the topics covered in our paper and those not presented. For example, one can think about separation axioms and compactness. Also the study of semi-, pre-, $\alpha$-, $\beta$, regular and $b$-anti-open sets should be performed. Moreover, it might be interesting to treat anti-topologies as semantic models for some classes of modal logics. In \cite{WIT} we presented such an approach with respect to generalized infra-topological spaces. Here we would like to sketch briefly some general and initial suppositions. 

Perhaps the most natural way would be to consider anti-topological possible-world semantics. Hence we should assume that $W \neq \emptyset$ (namely, the universe of possible worlds denoted by $w, v, u...$) is equipped with some anti-topology $\mathcal{T}$. Now the interpretation\footnote{We assume tacitly that these considerations are intended for those readers who are somewhat familiar with elementary tools of formal propositional logic. Hence, they should think that the interpretation of propositional variables is standard. The same with Boolean operators (including implication). Clearly, we assume that our logic is \emph{classical} in the sense that it satisfies the law of the excluded middle. In particular, it is not intuitionistic (however, we could consider this case).} of modality could be as follows: 

$w \Vdash \Box \varphi$ $\Leftrightarrow$ $V(\varphi) \in \mathcal{T}$, where $V(\varphi) = \{z \in W: z \Vdash \varphi\}$. 

Now let us consider the following formula: $\gamma := \Box \varphi \land \Box \psi \to \lnot \Box (\varphi \lor \psi)$. Assume that $\gamma$ is \emph{not} a tautology. Hence there is a model with some world $w$ such that $w \Vdash \Box \varphi$ and $w \Vdash \Box \psi$. Thus, both $V(\varphi)$ and $V(\psi)$ belong to $\mathcal{T}$. On the other hand, $w \nVdash \lnot \Box (\varphi \lor \psi)$, which means that $w \Vdash \Box (\varphi \lor \psi)$. Hence $V(\varphi \lor \psi) \in \mathcal{T}$. However, $V(\varphi \lor \psi) = V(\varphi) \cup V(\psi)$, so it cannot be anti-open (as a union of two different anti-open sets). Thus, $\gamma$ is a tautology. This result seems to be rather surprising and counter-intuitive. 

As for the notion of compactness (in its topological meaning), then it can be applied to anti-topologies too. However, one should remember that any open cover consisting of more than one set cannot be anti-open. Be as it may, we would like to apply the theory of \emph{abstract spectra} within anti-topological framework (see \cite{DIMOV}). In its original form, this theory shows some interesting applications of the interplay between two topologies on the same space (and their subfamilies). 

Finally, we think that one can look for applications of anti-topologies in the field of data clustering, knowledge bases and formal concept analysis. We think that the idea of exclusion (namely, the idea of anti-closure of our family under some operations, namely unions and intersections) may be studied in this context.

\end{document}